\documentclass[reqno]{amsart}
\usepackage{amsmath}
\usepackage{amssymb, amsthm, amsfonts, tikz-cd, mathrsfs,mathtools,stmaryrd,enumitem, algorithmic,float,comment,tikz,hyperref}
\usepackage[toc]{appendix}
\usetikzlibrary{backgrounds}
\usetikzlibrary{decorations.pathreplacing}
\usepackage{fullpage}
\usepackage{xcolor}

\usepackage{tikz}
\usetikzlibrary{calc,decorations.pathreplacing}

\hypersetup{
  colorlinks   = true, 
  urlcolor     = blue, 
  linkcolor    = blue, 
  citecolor   = red 
}

\usepackage{comment}

\usepackage[capitalize,nameinlink]{cleveref}
\crefname{theorem}{Theorem}{Theorems}
\crefname{lemma}{Lemma}{Lemmas}
\crefname{claim}{Claim}{Claims}
\crefname{prop}{Proposition}{Propositions}
\crefname{figure}{Figure}{Figures}

\newtheorem{theorem}{Theorem}

\newtheorem{lemma}[theorem]{Lemma}

\newtheorem{problem}[theorem]{Problem}

\newtheorem*{claim*}{Claim}

\theoremstyle{remark}
\newtheorem*{remark*}{Remark}

\theoremstyle{definition}
\newtheorem{definition}[theorem]{Definition}

\numberwithin{theorem}{section}

\renewcommand{\phi}{\varphi}

\newcommand{\cM}{\mathcal M}

\def\1{\mathbbm{1}}

\renewcommand{\le}{\leqslant}
\renewcommand{\ge}{\geqslant}

\newcommand{\dist}{\operatorname{dist}}

\newcommand{\lpr}[1]{\left(#1\right)}

\newcommand{\diam}{\operatorname{diam}}
\newcommand{\Tr}{\operatorname{tr}}
\newcommand{\arcosh}{\operatorname{arcosh}}

\newcommand{\ov}[1]{\overline{#1}}

\usepackage[
backend=biber,
style=numeric,
maxnames=99,
giveninits=true
]{biblatex}
\renewbibmacro{in:}{%
  \ifentrytype{article}{}{\printtext{\bibstring{in}\intitlepunct}}}

\title
{Generalized Short Monochromatic Odd Cycles} 

\author{Sammy Luo}
\thanks{Binghamton University. Email: \texttt{sammy.luo@binghamton.edu}}
\author{Zixuan Xu}
\thanks{Princeton University. Email: \texttt{zx6683@princeton.edu}}

\begin{document}

\maketitle
\vspace{-1.2em}

\begin{abstract}
    Answering a question of Noga Alon, we generalize a result of Janzer and Yip to show that every $k$-edge-coloring of $K_{r^k+1}$ contains a monochromatic subgraph with chromatic number at least $r+1$ and diameter $O(k^{3/2}r^{k/2}\log r)$.
\end{abstract}

\section{Introduction}
The inspiration for our work in this paper stems from the multicolor Ramsey number question for odd cycles: For a fixed $k$, what is the smallest $n$ such that every $k$-coloring of the edges of $K_n$ contains a monochromatic odd cycle? A simple observation shows that $K_{2^k}$ can be $k$-edge-colored so that every color class is bipartite; for example, one can label the vertices with length $k$ binary strings, then color each edge $vw$ based on the leftmost bit at which the labels of $v$ and $w$ differ. This construction is quite tight, in the sense that moving any edge to a new color class will form many odd cycles, and in fact many triangles, in that color.

Meanwhile, every $k$-edge-coloring of $K_{2^k+1}$ contains a monochromatic odd cycle, but the simplest way to see this is via a very non-local argument using the submultiplicativity of the chromatic number when taking unions of graphs, which gives no information on the size of the odd cycle found. 
Motivated by these observations, Erd\H{o}s and Graham asked the following question.

\begin{problem}[Erd\H{o}s--Graham~\cite{ErdosGraham1975PartitionTheorems}]\label{problem:odd-cycle}
    What is the smallest $L(k)$ such that every $k$-edge-coloring of the complete graph $K_{2^k+1}$ has a monochromatic odd cycle of length at most $L(k)$?
\end{problem}

The first nontrivial upper bound on $L(k)$ was proven by Gir\~ao and Hunter \cite{girão2024monochromaticoddcyclesedgecoloured}, who showed that $L(k)\le \frac{2^k+1}{k^{1-o(1)}}$. This was then improved by Janzer and Yip \cite{JanzerYip2026ShortMonochromaticOddCycles} to the current best known upper bound of $L(k)=O(k^{3/2}2^{k/2})$. Meanwhile, the best known lower bound is given by Day and Johnson \cite{DayJohnson2017MulticolourRamseyOddCycles}, who showed that $L(k)\ge 2^{\Omega(\sqrt{\log k})}$.

Noga Alon~\cite{Noga1} asked whether these results could be generalized by asking an analogous question about monochromatic subgraphs with chromatic number at least $r+1$ in a $k$-edge-coloring of $K_{r^k+1}$: How ``small'' of a monochromatic subgraph $H$ with chromatic number $\ge r+1$ can we find in such a coloring? While there are multiple notions of ``small'' that seem like potential natural generalizations of the length of the shortest monochromatic odd cycle to higher chromatic numbers, the one that turns out to be useful to us looks at the \emph{diameter} $\diam(H)$ of $H$, which is defined as the maximum length of the shortest path between some two vertices $u,v\in V(H)$. The specific question we study is the following generalization of \cref{problem:odd-cycle}.

\begin{problem}
    What is the smallest $L_r(k)$ such that every $k$-edge-coloring of the complete graph $K_{r^k+1}$ has a monochromatic subgraph $H$ of diameter at most $L_r(k)$ with chromatic number $\chi(H) \ge r+1$?
\end{problem}
Note that the case $r = 2$ directly corresponds to \cref{problem:odd-cycle}, since a graph $G$ contains a subgraph with chromatic number at least $3$ and diameter $\le d$ if and only if it contains an odd cycle of length $\le 2d+1$. 

The reason that diameter of $H$ turns out to be a more natural quantity to study than, say, the number of vertices in $H$, comes from our need to generalize the following key property of small odd cycles which is used in \cite{JanzerYip2026ShortMonochromaticOddCycles}: A graph $G$ with no small odd cycles looks locally bipartite in a way that is captured \emph{spectrally}, by the fact that small odd powers of the adjacency matrix $A_G$ (or in fact of any matrix with zeros in the same places as $A_G$) have all zeros along their diagonal, due to $G$ not having any short closed walks of odd length. The corresponding property for larger $r$ is captured in \cref{lem:poly-local} and \cref{lem:trace} below.

Our main result is the following upper bound on $L_r(k)$.

\begin{theorem}\label{thm:ub}
 For all integers $r\ge 2$ and $k\ge 1$, we have
 \[L_r(k)= O\lpr{k^{3/2}r^{k/2}\log r}.\]
 In particular, for fixed $r$, we have $L_r(k)= O_r\lpr{k^{3/2}r^{k/2}}$.
\end{theorem}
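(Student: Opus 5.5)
We argue by contradiction, following the spectral strategy of Janzer and Yip. Fix a $k$-edge-coloring of $K_n$ with $n=r^k+1$ and color classes $G_1,\dots,G_k$. Assume that no color class contains a subgraph $H$ with $\chi(H)\ge r+1$ and $\diam(H)\le L$, where $L=Ck^{3/2}r^{k/2}\log r$. The goal is to produce matrices supported on the color classes whose combined behaviour forces $n\le r^k$.

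\textbf{Local $r$-colorability.} The assumption says that every ball of radius about $L/2$ in each $G_i$ is $r$-colorable. This is the content we extract via \cref{lem:poly-local} and \cref{lem:trace}. The natural analogue of "odd closed walks vanish" is as follows. Let $M$ be the matrix with $M_{uv}=\omega^{c(u)-c(v)}$ for an $r$-coloring $c$ of a local ball and $\omega$ a primitive $r$-th root of unity. If $A$ is a (Hermitian) weighting of $G_i$ twisted by $\omega$, then for every polynomial $p$ of degree $\le L$ whose relevant coefficients vanish, the diagonal of $p(A)$ is controlled. In particular $\Tr p(A_i)$ equals $n\,p(\lambda^*)$ for a suitable normalization, where the $r$-colorability forces the eigenvalue mass to behave like that of a complete $r$-partite graph. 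Concretely, I expect \cref{lem:trace} to say roughly the following. For a matrix $A_i$ supported on $G_i$ with local $r$-partite structure, and a polynomial $p$ of degree $d\le L$, $\Tr p(A_i)$ equals what it would be for a "locally $r$-partite" model. Consequently the spectrum of $A_i$ cannot concentrate near the top eigenvalue beyond a fraction $1/r$.

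\textbf{Combining colors.} Take $A=\sum_i A_i$, built so that $A$ equals $J-I$ (or a tensor-like combination) on $K_n$. Each $A_i$ is chosen as the adjacency matrix with entries twisted by roots of unity coming from local colorings; since colorings are only local, we instead apply polynomials that only see $L$-neighbourhoods. Then use a polynomial approximation argument. Choose $p$ of degree $d\approx L/k$ (a Chebyshev-type polynomial) approximating the indicator of the top part of the spectrum to precision $1/\mathrm{poly}(n)$. This costs degree about $\sqrt{\kappa}\log(1/\varepsilon)$, which gives the $r^{k/2}$ factor (square root of the $r^k$ spectral ratio) times $\log$ terms. Those terms yield $k\log r$, and the extra $\sqrt{k}$ comes from the $k$-fold combination. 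The trace inequality then gives a multiplicative bound. Each color reduces the "top rank" by a factor $r$, in analogy with submultiplicativity $\chi(G_1\cup\dots\cup G_k)\le\prod\chi(G_i)$. After $k$ colors, the all-ones direction of $J$ (eigenvalue $n-1$) would need multiplicity fraction $\le r^{-k}$, i.e.\ $1/n\le r^{-k}$, contradicting $n=r^k+1$.

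\textbf{Main obstacle.} The hardest step is making the iterated, product-style spectral bound work with only local information. One must handle the noncommuting $A_i$'s by working inductively over colors, each step applying a low-degree polynomial to one color class while preserving the trace estimates from \cref{lem:trace}. One must also choose polynomial degrees so the total radius stays within $L$. I would first try to replicate Janzer–Yip's inductive "spectral rank" argument, replacing the bipartite sign-flip $\pm1$ with $r$-th roots of unity, and tuning the Chebyshev degree to $O(\sqrt{k}\,r^{k/2}\log r)$ per step so that the total is $O(k^{3/2}r^{k/2}\log r)$.
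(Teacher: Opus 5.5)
Your proposal does not yet contain a proof. Both load-bearing steps, the combination over colors and the local trace bound, are stated only as expectations, and the mechanisms you suggest for them would not work.

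\textbf{Combining colors.} You try to assemble $\sum_i A_i=J-I$ and peel off a factor $r$ per color. You correctly identify noncommutativity as the obstacle, but you have no way around it. The missing idea is to work with the Lov\'asz theta function, which is submultiplicative under edge-unions and has a spectral description. By \cref{lem:submultiplicative} and $\vartheta(\ov{K_n})=n$, some single color class $G_j$ satisfies $\vartheta(\ov{G_j})\ge n^{1/k}=r(1+r^{-k})^{1/k}$. By \cref{lem:spectral-theta}, this equals $\lambda_1(M)$ for a Gram matrix $M\in\cM(G_j)$ of an orthonormal representation: positive semidefinite, unit diagonal, zero at non-edges. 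From then on the argument involves one color and one matrix, so there is no induction over colors and nothing noncommuting. Your heuristic endpoint also points the wrong way: $1/n\le r^{-k}$ is equivalent to $n\ge r^k$, which $n=r^k+1$ satisfies, so there is no contradiction. What is needed is an explicit per-color bound $\vartheta(\ov{G_j})\le r+\eta(\rho)$, so that $n\le(r+\eta(\rho))^k$. Comparing $\eta(\rho)$ with the tiny margin $r\bigl((1+r^{-k})^{1/k}-1\bigr)\approx r^{1-k}/k$ is exactly what determines $L$.

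\textbf{The local step.} Twisting by $\omega^{c(u)-c(v)}$ carries no information. The entrywise twist of a matrix $A$ supported on a ball is $DAD^{*}$ with $D=\operatorname{diag}(\omega^{c(u)})$, which is unitarily similar to $A$. For $r=2$ this relates $A$ to $-A$ off the diagonal, which is genuinely useful; for $r\ge3$ the twisted matrix is not a scalar multiple of $A$, so the similarity says nothing. In any case, the colorings of different balls are not compatible. Nor is there an analogue of the vanishing of short odd closed walks. An $r$-colorable ball with $r\ge3$ may contain triangles, hence closed walks of every length $\ge2$, so no identity like $\Tr p(A_i)=n\,p(\lambda^*)$ can come from vanishing coefficients.

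The paper replaces walk-counting by a spectral statement: for $M\in\cM(G_j)$ and an $r$-colorable ball $B_v$, $\lambda_1(M[B_v])\le\vartheta(\ov{B_v})\le\chi(B_v)\le r$, so the spectrum of $M[B_v]$ lies in $[0,r]$. Take $p(x)=T_\rho(2x/r-1)$. Locality (\cref{lem:poly-local}) gives $\|p(M)e_v\|_2=\|p(M[B_v])e_v\|_2\le 1$, hence $\Tr\bigl(p(M)^2\bigr)\le n$. Meanwhile the top eigenvalue $r+\eta$ of $M$ contributes at least $\frac14\bigl(1+2\sqrt{\eta/r}\bigr)^{2\rho}$ to this trace. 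With $\eta/r\gtrsim\delta/k$ and $\delta=r^{-k}$, this forces $\rho=O(\sqrt{k/\delta}\,\log n)=O(k^{3/2}r^{k/2}\log r)$. A ball of radius $\rho+1$ is then the required subgraph, with diameter at most $2\rho+2$.

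So there is a single Chebyshev polynomial whose degree is the full radius. The $\sqrt{k}$ comes from the $k$-th root shrinking the spectral gap to order $r\delta/k$, not from summing per-color degrees as in your accounting.
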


We obtain this bound by building on the framework of Janzer and Yip's argument in~\cite{JanzerYip2026ShortMonochromaticOddCycles}. As in their argument, our result follows from a more general bound that also applies to complete graphs with a larger number of vertices.

\begin{theorem}\label{thm:ub-gen}
 Let $r\ge 2$ and $k\ge 1$ be integers, and let $0<\delta \le 1$ such that $n=(1+\delta)r^k$ is an integer. Then in every $k$-edge-coloring of $K_n$ there is a monochromatic subgraph with chromatic number at least $r+1$ and diameter $O(k^{3/2}\delta^{-1/2}\log r)$.
\end{theorem}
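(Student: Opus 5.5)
The plan is to argue by contradiction, following the spectral/rank framework of Janzer and Yip. Fix $D = Ck^{3/2}\delta^{-1/2}\log r$ and suppose that for every color $i$ the color class $G_i$ contains no subgraph of chromatic number $\ge r+1$ and diameter $\le D$. Then every ball of radius about $D/2$ in $G_i$ induces an $r$-colorable graph, so $G_i$ is ``locally $r$-partite''. The property I will exploit is the one recorded in \cref{lem:poly-local} and \cref{lem:trace}. For a symmetric matrix $M$ supported on $G_i$ plus the diagonal, and any polynomial $p$ of degree $d\lesssim D$, the diagonal entries of $p(M)$, and hence $\Tr p(M)$, behave exactly as they would for a matrix supported on a genuinely $r$-partite graph. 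This is because every closed walk of length $\le d$ stays inside a ball that is $r$-colorable.

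\textbf{Step 1 (one color at a time).} For each color $i$ I will build a positive semidefinite matrix $P_i=p_i(M_i)$ with the following properties. Its diagonal is (close to) all ones. It is supported on pairs at $G_i$-distance $\le d$. It has small \emph{effective rank}, $\Tr(P_i)^2/\Tr(P_i^2)\le r(1+\varepsilon)$, or an analogous spectral-mass bound. In the $r$-partite model the natural target is the ``simplex'' Gram matrix: entries $1$ on the diagonal and $-1/(r-1)$ across parts, which has rank $r-1$ plus a rank-one correction. The idea is to take $M_i$ to be a suitably normalized adjacency-type matrix of $G_i$ and $p_i$ a Chebyshev-type polynomial of degree $d$. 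This polynomial should approximate the spectral projection onto the top and bottom of the spectrum. By \cref{lem:trace}, Hoffman-type constraints hold up to degree $d$ (most notably, the negative eigenvalue mass must be at least $1/(r-1)$ of the positive mass). These constraints let us control $\Tr p_i(M_i)$ and $\Tr p_i(M_i)^2$ as in the $r$-partite case. The Chebyshev approximation costs an error $\varepsilon \approx (\log r)^2/d^2$ (up to the normalization), and the $\log r$ comes from needing uniform approximation on an interval whose endpoints are separated by ratio roughly $r$.

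\textbf{Step 2 (combine colors).} Form the Hadamard product $P=P_1\circ\cdots\circ P_k$. This matrix is PSD, has unit diagonal, and satisfies $\operatorname{rank}$ or effective rank at most $\prod_i r(1+\varepsilon)\le r^k e^{k\varepsilon}$. On the other hand, consider an off-diagonal pair $uv$ of color $j$. The factor $(P_j)_{uv}$ is forced small (near $-1/(r-1)$, or better, after squaring or tensoring), and all other factors are bounded by $1$. This gives $\Tr(P^2)\le n+\text{small}$, so the effective rank of $P$ is at least about $n=(1+\delta)r^k$. These two bounds contradict each other once $k\varepsilon\ll\delta$. Quantitatively, the total error accumulated must be $O(\delta)$; following Janzer and Yip's bookkeeping, with the off-diagonal control costing an extra factor $k$, this requires $d^2\gtrsim k^3\delta^{-1}\log^2 r$. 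That is exactly the claimed $D=O(k^{3/2}\delta^{-1/2}\log r)$.

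\textbf{Main obstacle.} The hard step is Step 1 together with its interface with Step 2. For $r=2$ the sign structure is rigid: odd powers have zero diagonal, so $A$ and $-A$ are locally similar. For general $r$ the only local information is the trace constraints coming from $r$-colorability of balls. I would first try to show, using \cref{lem:trace}, that the moment sequence $\Tr M_i^t$ for $t\le d$ is consistent with a spectral measure having $\lambda_{\min}\le-\lambda_{\max}/(r-1)$. I would then choose $p_i$ by a Chebyshev extremal argument on $[-\lambda_{\max}/(r-1),\lambda_{\max}]$. This must be done so that the off-diagonal entries on color-$i$ edges are quantitatively small enough to make the product in Step 2 nearly diagonal.
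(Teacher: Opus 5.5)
Your proposal has a genuine gap. The per-color object in Step 1 does not exist in general, and Step 2 relies on a multiplicativity that is false.

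\textbf{Step 1.} If $P_i$ is positive semidefinite with diagonal close to $1$, then $|(P_i)_{uv}|\lesssim 1$. If it is also supported on pairs at $G_i$-distance at most $d$, then $\Tr(P_i^2)\lesssim n\max_v|B_{G_i}(v,d)|$, so $\Tr(P_i)^2/\Tr(P_i^2)\gtrsim n/\max_v|B_{G_i}(v,d)|$. A color class that is empty, a matching, or a long odd cycle satisfies your hypothesis. Yet it forces effective rank far above $r(1+\varepsilon)$, even though such colors are harmless in the product, since $\vartheta(\ov{G_i})\le 2+o(1)$. So you are bounding the wrong per-color quantity. For a long odd cycle there is also no spectral gap near $\pm 2$, so a degree-$d$ polynomial cannot approximate the projection onto the extreme eigenvalues.

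\textbf{Step 2.} The quantity $\Tr(P)^2/\Tr(P^2)$ is not submultiplicative under Hadamard products. Let $[n]=S\sqcup T$ with $|S|=|T|=n/2$. Then $P=J_S\oplus I_T$ and $Q=I_S\oplus J_T$ each have effective rank below $4$, but $P\circ Q=I$ has effective rank $n$. Genuine rank is submultiplicative, but $p_i(M_i)$ has essentially full rank.

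\textbf{The ``Hoffman-type'' constraint.} The inequality $\lambda_{\min}\le-\lambda_{\max}/(r-1)$ comes from global colorability, not from local moments. For adjacency-type matrices the spectra of the ball blocks do not lie in one fixed interval. So the mechanism of \cref{lem:trace}, which needs a single interval on which $|p|\le 1$ for every ball, is unavailable. Also, for $r=2$ the entry $-1/(r-1)=-1$ is not small.

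\textbf{The missing idea.} Stay on the dual side of $\vartheta$, and never build per-color certificates or combine colors by hand. The paper uses \cref{lem:submultiplicative} as a black box to get one color $j$ with $\vartheta(\ov{G_j})\ge n^{1/k}=r(1+\delta)^{1/k}$. Via \cref{lem:spectral-theta} it takes $M\in\cM(G_j)$ with $\lambda_1(M)=\vartheta(\ov{G_j})$; such $M$ is PSD, has unit diagonal, and vanishes on non-edges of $G_j$. The local input is $\lambda_1(M[B_v])\le\vartheta(\ov{B_v})\le\chi(B_v)\le r$, so every ball block has spectrum in the fixed interval $[0,r]$. With $p(x)=T_\rho(2x/r-1)$ this gives $\|p(M)e_v\|_2\le 1$, hence $\Tr(p(M)^2)\le n$. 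Meanwhile the top eigenvalue $r+\eta$ contributes $T_\rho(1+2\eta/r)^2$ to that trace. This forces $1+2\sqrt{\eta/r}\le(2\sqrt n)^{1/\rho}$. Since $\eta/r\ge\delta/(2k)$, it yields $\rho=O(k^{3/2}\delta^{-1/2}\log r)$. The factor $k\log r$ comes from $\log\sqrt n$, and the factor $\sqrt{k/\delta}$ from the Chebyshev square-root gain. Your accounting ($\varepsilon\approx(\log r)^2/d^2$ plus an ``extra factor $k$'') is fitted to the target rather than derived.
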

When $r=2$, we recover a bound of $O(k^{3/2}\delta^{-1/2})$ on the length of the shortest monochromatic odd cycle in every $k$-edge-coloring of $K_n$, which matches the bound of $4k^{3/2}\delta^{-1/2}$ in \cite[Theorem~1.5]{JanzerYip2026ShortMonochromaticOddCycles} up to a constant.

\cref{thm:ub} follows by taking $\delta=r^{-k}$ in \cref{thm:ub-gen}.

In addition, by adapting the argument of Day and Johnson~\cite{DayJohnson2017MulticolourRamseyOddCycles}, we show the following lower bound.

\begin{theorem}\label{thm:lb}
    For all integers $r\ge 2$ and $k\ge 1$, we have
    \[L_r(k)\ge 2^{\Omega(\sqrt{\log k})}.\]
\end{theorem}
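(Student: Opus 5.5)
The plan is to reduce \cref{thm:lb} to the case $r=2$, which is exactly the Day--Johnson lower bound $L(k)\ge 2^{\Omega(\sqrt{\log k})}$ from \cite{DayJohnson2017MulticolourRamseyOddCycles}, and then lift that construction to general $r$ by a product (blow-up) argument. Day and Johnson give, for every $k$, a $k$-edge-coloring of $K_{2^k+1}$ with no monochromatic odd cycle of length at most $2d+1$, where $d=2^{c\sqrt{\log k}}$. For $r=2$ this means no monochromatic subgraph with chromatic number at least $3$ and diameter at most $d$, so $L_2(k)\ge d$. For general $r$ I would not simply reuse that coloring, because we need $r^k+1$ vertices. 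Instead I would adapt their iterative construction with ``$r$-partite'' building blocks in place of bipartite ones.

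The concrete approach has three steps. First, recall the structure of the Day--Johnson coloring. They start from colorings where every color class is bipartite, which is the binary-string construction. They then combine several such colorings, spending a few extra colors, to gain one extra vertex while keeping every color class locally bipartite, meaning free of short odd cycles. Second, replace the binary-string coloring on $2^k$ vertices with the $r$-ary analogue on $r^k$ vertices. Label vertices by strings in $[r]^k$ and color $vw$ by the first coordinate where the strings differ. Each color class is then $r$-partite, so it has chromatic number at most $r$. Third, perform the Day--Johnson combination step. Each bipartition used in their argument becomes an $r$-partition, and the one extra vertex is attached in the same way. The target property to verify is the following: every monochromatic subgraph of diameter at most $d$ admits a proper $r$-coloring. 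This is the natural generalization of ``locally bipartite.''

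To verify this property I would use a local-coloring lemma. Suppose that in some color class every ball of radius $2d$ (or similar) around a vertex is $r$-colorable via a coloring induced from a global partition, except possibly near the extra vertex. Then any subgraph $H$ of diameter at most $d$ lies inside one such ball, so $\chi(H)\le r$. In their argument, the odd cycle is ruled out by showing that a short closed walk must stay in a region where the coloring agrees with a fixed bipartition. Our analogue will show that a small-diameter subgraph stays in a region where the coloring agrees with a fixed $r$-partition. Since $H$ is connected with diameter at most $d$, all of its vertices lie within distance $d$ of any one vertex, and this is the locality we need.

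I expect the main obstacle to be the combination step. In Day--Johnson, parity arguments such as ``an odd cycle must use the special structure an odd number of times'' do real work, and parity has no direct analogue for $r$-colorings. To handle this I would reformulate their invariant in terms of homomorphisms to $K_r$ instead of bipartitions: each color class restricted to any ball of radius $d$ should admit a homomorphism to $K_r$. Consistency around the extra vertex would then be checked by composing the local homomorphisms with a fixed permutation of $[r]$. If this works, the exponent $\Omega(\sqrt{\log k})$ carries over unchanged, because the number of colors spent per iteration does not depend on $r$. This yields $L_r(k)\ge 2^{\Omega(\sqrt{\log k})}$ uniformly in $r$.
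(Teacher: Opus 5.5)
Your proposal has a genuine gap: it never constructs a coloring. All of the content is handed off to ``the Day--Johnson combination step,'' with bipartitions replaced by $r$-partitions and the extra vertex ``attached in the same way,'' and then to ``if this works.'' The step you want to translate is also not what Day and Johnson do. They do not start from the binary-string coloring of $K_{2^k}$ and then spend a few colors to add a vertex. Their induction, which the paper generalizes directly, carries a root $O$ from the base case $K_3$ onward. The invariant is that color class $i$ admits a homomorphism to an odd cycle $C_{r_i}$ in which $O$ is the only preimage of a fixed vertex. The inductive step takes two copies of $U=V\setminus\{O\}$, gives the matching edges $x^{(1)}x^{(2)}$ the new color, and recolors specified edges of the shortest color. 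This turns $(r_1,\dots,r_k)$ into $(r_1+2,r_2,\dots,r_k,2r_1-1)$. So your step 2 (the $r$-ary string coloring of $K_{r^k}$) is not a starting point for anything, and ``attach the extra vertex'' has no referent. Your local-coloring lemma is correct but is the trivial part: a connected $H$ with $\diam(H)\le d$ lies in the radius-$d$ ball around any of its vertices. Its clause ``except possibly near the extra vertex'' also exempts the subgraphs through the root, and in rooted constructions of this kind those are the only ones that can fail to be $r$-colorable.

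What is missing is an explicit $r$-analogue of the odd cycle to serve as the homomorphism target; this is what replaces parity. The paper uses the bracelet $B_{r,n}$, obtained from $C_{2n+1}$ by blowing up every other vertex into $K_{r-1}$. In any proper $r$-coloring of $B_{r,n}$ minus its closing edge, all little beads get the same color, so $\chi(B_{r,n})=r+1$. Yet any $H$ mapping homomorphically to $B_{r,n}$ with $\diam(H)<n$ avoids some link between consecutive beads, and is therefore $r$-colorable. The invariant is a rooted bracelet decomposition. The inductive step (\cref{lem:induct}) uses $r$ copies $U_1,\dots,U_r$ of $U$ and gives every edge $x^{(i)}x^{(j)}$ the new color. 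It then moves three kinds of old shortest-color edges to the new color: edges inside a big bead that join different copies; edges $x^{(1)}y^{(j)}$ with $j>1$ where the bead of $\phi(x)$ immediately follows that of $\phi(y)$; and two families of edges at $O$. One checks that the old color then maps to $B_{r,n_1+1}$ and the new one to $B_{r,2n_1}$. Hence $(2n_i+1)_i$ follows exactly the Day--Johnson recursion, and their numerical analysis yields $2^{\Omega(\sqrt{\log k})}$. Your idea of local homomorphisms to $K_r$ glued by a permutation twist points toward this structure, since the closing edge of $B_{r,n}$ is exactly such a twist. But until you fix and verify the target graph and the recoloring rule, ``the exponent carries over unchanged'' is unsupported. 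The exponent comes from this specific length recursion, not merely from spending one color per step.
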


The rest of the paper proceeds as follows: We begin by introducing some tools we need, most notably the Lov\'asz theta function and the Chebyshev polynomials, along with some of their useful properties, in~\cref{sec:prelims}. We then give the proof of \cref{thm:ub-gen} in \cref{sec:ub} and the proof of \cref{thm:lb} in \cref{sec:lb}. We end with some concluding remarks in \cref{sec:conclusion}.

\section{Preliminaries}
\label{sec:prelims}

Let $G$ be a graph. We use $V(G)$ and $E(G)$ to denote the vertex set and edge set of $G$, respectively. For $v\in V(G)$, we use
\[B_G(v,\rho)=\{u\in V(G):\dist_G(u,v)\le \rho\}\]
to denote the ball of radius $\rho$ centered at $v$, where $\dist_G(u,v)$ is the number of edges in the shortest path between $u$ and $v$. We view $B_G(v,\rho)$ as the induced subgraph on this vertex set. We use $\chi(G)$ to denote the chromatic number of $G$.

\subsection{Lov\'asz theta function} 

\begin{definition}[Orthonormal representation]
    For a graph $G$, an \emph{orthonormal representation} $X$ is a set of unit vectors $\{x_v\}_{v\in V(G)}$ in a Euclidean space such that $x_u\cdot x_v = 0$ whenever $u\ne v\in V(G)$ and $uv\not\in E(G)$.
\end{definition}

For a graph $G$, denote by $\mathcal M(G)$ 
the set of Gram matrices $M[X]\in \mathbb{R}^{V(G)\times V(G)}$ given by $M[X]_{u,v} = x_u\cdot x_v$ for some orthonormal representation $X$ of $G$. In particular, every $M\in \mathcal{M}(G)$ is symmetric and positive semidefinite (i.e. all its eigenvalues are nonnegative) and satisfies $M_{vv}=1$ for all $v$ and $M_{uv}=0$ for all $u\neq v$ such that $uv\notin E(G)$.

\begin{definition}[Lov\'asz theta function]
    For a graph $G$, the Lov\'asz theta function $\vartheta(G)$ is defined as 
    \[\vartheta(G) := \min_{c,X}\max_{v\in V(G)}(c\cdot x_v)^{-2}\]
    where the minimum is over all unit vectors $c\in \mathbb{R}^{V(G)}$ and orthonormal representations $X$ of $G$.
\end{definition}

 We recall several useful properties of the Lov\'asz theta function $\vartheta(G)$ of a graph $G$, the proofs of which can be found in \cite{JanzerYip2026ShortMonochromaticOddCycles}.

\begin{lemma}\label{lem:spectral-theta}
    For a graph $G$, we have $\vartheta(\ov G)=\max_{M\in\mathcal M(G)}\lambda_1(M)$, where $\ov G$ denotes the complement of $G$, and $\lambda_1(M)$ is the largest eigenvalue of $M$.
\end{lemma}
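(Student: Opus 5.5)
We prove the two inequalities $\vartheta(\ov G)\le \max_{M\in\cM(G)}\lambda_1(M)$ and $\vartheta(\ov G)\ge \max_{M\in\cM(G)}\lambda_1(M)$ separately. Both rest on the duality between orthonormal representations of $G$ and of $\ov G$. The link between the definition of $\vartheta$ and $\cM(G)$ is that an orthonormal representation of $\ov G$ requires orthogonality exactly on the edges of $G$. Meanwhile a matrix $M\in\cM(G)$ is supported on the diagonal and the edges of $G$, i.e.\ $M_{uv}=0$ on non-edges of $G$ (edges of $\ov G$).

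\textbf{Step 1 ($\max\lambda_1\le\vartheta(\ov G)$).} Let $M=M[X]\in\cM(G)$ and let $c$ be a unit vector and $Y=\{y_v\}$ an orthonormal representation of $\ov G$ attaining $\vartheta(\ov G)$. Then $(c\cdot y_v)^2\ge 1/\vartheta(\ov G)$ for all $v$. Consider the tensor vectors $z_v=x_v\otimes y_v$. For $u\ne v$, either $uv\notin E(G)$, so $x_u\cdot x_v=0$, or $uv\in E(G)$, so $uv\notin E(\ov G)$ and $y_u\cdot y_v=0$. Hence the $z_v$ are orthonormal.

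Now let $a$ be a unit top eigenvector of $M$, and set $w=\sum_v a_v x_v$, so that $|w|^2=a^TMa=\lambda_1(M)$. Consider the unit vector $(w/|w|)\otimes c$. By Bessel's inequality,
\[1\ge \sum_v \bigl((w/|w|\otimes c)\cdot z_v\bigr)^2=\frac{1}{\lambda_1}\sum_v (w\cdot x_v)^2(c\cdot y_v)^2\ge \frac{1}{\lambda_1\vartheta(\ov G)}\sum_v (w\cdot x_v)^2 .\]
Finally, $\sum_v (w\cdot x_v)^2=|Ma|^2=\lambda_1^2$. This gives $\lambda_1\le\vartheta(\ov G)$.

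\textbf{Step 2 ($\vartheta(\ov G)\le\max\lambda_1$).} This is the harder direction; we construct an $M\in\cM(G)$ with $\lambda_1(M)\ge\vartheta(\ov G)$. The standard route is via the SDP formulation:
\[\vartheta(\ov G)=\max\Bigl\{\textstyle\sum_{u,v}B_{uv}: B\succeq0,\ \Tr B=1,\ B_{uv}=0 \text{ for } uv\in E(\ov G)\Bigr\}.\]
Here $E(\ov G)$ is the set of non-edges of $G$. Establishing this formula is the main obstacle: it amounts to proving the minimax equality for $\vartheta$ (Lovász's theorem) by SDP duality, with strong duality justified by Slater's condition, since the identity is strictly feasible for the dual side.

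Given an optimal $B$, write $B_{vv}=b_v^2$. If some $b_v=0$, positive semidefiniteness forces that row of $B$ to vanish, so we can set $M_{vv}=1$ with zero off-diagonal entries there. Otherwise define
\[M_{uv}=\frac{B_{uv}}{b_ub_v}.\]
Then $M$ is PSD, has unit diagonal, and vanishes on non-edges of $G$. Gram factorization gives unit vectors $x_v$ with $M=M[X]$, so $M\in\cM(G)$.

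Testing $M$ against the unit vector $b=(b_v)$, which is a unit vector because $\Tr B=1$, gives
\[\lambda_1(M)\ge b^TMb=\sum_{u,v} B_{uv}=\vartheta(\ov G).\]

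Combining Steps 1 and 2 proves the lemma. Alternatively, since the paper says the proofs are in \cite{JanzerYip2026ShortMonochromaticOddCycles}, one could simply cite Lovász's characterization there for the SDP formula in Step 2.
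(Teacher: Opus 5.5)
Your proof is correct. The paper gives no argument of its own for this lemma: it defers to Janzer--Yip, who in turn rely on Lov\'asz's characterizations of $\vartheta$. So the natural comparison is with Lov\'asz's Theorem~5, which states that $\vartheta(\ov G)$ is the maximum of $\sum_v (d\cdot x_v)^2$ over orthonormal representations $X$ of $G$ and unit vectors $d$. That is exactly this lemma, because $\max_d\sum_v(d\cdot x_v)^2=\lambda_1(M[X])$.

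What you give is, in substance, the classical proof of that theorem. The tensor-product/Bessel argument gives $\lambda_1(M)\le\vartheta(\ov G)$ for every $M\in\cM(G)$. Rescaling an optimal SDP matrix via $M_{uv}=B_{uv}/(b_ub_v)$ and testing against the unit vector $b$ gives the reverse inequality. Compared with a bare citation, your write-up makes the weak-duality inequality used in \cref{lem:local-r} fully self-contained. It also shows that the inequality used in \cref{lem:cheb} (choosing $M$ with $\lambda_1(M)=\vartheta(\ov G)$) is a genuine duality statement, all of whose depth lies in the SDP formula you quote. The citation is shorter but hides this.

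If you want to prove that SDP formula rather than cite Lov\'asz's Theorem~4, your Slater sketch is missing one link. Conic duality only identifies $\max\sum_{u,v}B_{uv}$ with $\min\lambda_1(A)$ over symmetric $A$ with $A_{vv}=1$ and $A_{uv}=1$ for $uv\in E(G)$. Slater's condition does hold, since both programs are strictly feasible: $B=I/n$ on the maximization side, and $tI-A\succ 0$ for large $t$ on the other. To reach the paper's orthonormal-representation definition, you still need the easy half of Lov\'asz's eigenvalue characterization (his Theorem~3):
\begin{itemize}
\item Write $\lambda_1(A)I-A$ as the Gram matrix of vectors $u_v$.
\item Pick a unit vector $c$ orthogonal to all the $u_v$.
\item Set $y_v=(c+u_v)/\sqrt{\lambda_1(A)}$.
\end{itemize}
These are unit vectors with $y_u\cdot y_v=0$ whenever $uv\in E(G)$, so they form an orthonormal representation of $\ov G$. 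Since $(c\cdot y_v)^{-2}=\lambda_1(A)$ for every $v$, this gives $\vartheta(\ov G)\le\min_A\lambda_1(A)$. With this link, or with the citation, the argument is complete.
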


\begin{lemma}\label{lem:theta-complete-graph}
    For a complete graph $K_n$ on $n$ vertices, we have $\vartheta(\ov K_n) = n$.
\end{lemma}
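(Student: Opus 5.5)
We need to prove $\vartheta(\ov K_n)=n$, where $\ov K_n$ is the empty graph on $n$ vertices. The cleanest route goes through \cref{lem:spectral-theta}: it gives $\vartheta(\ov K_n)=\max_{M\in\mathcal M(K_n)}\lambda_1(M)$. So it suffices to show two things: some $M\in\mathcal M(K_n)$ has $\lambda_1(M)\ge n$, and every $M\in\mathcal M(K_n)$ has $\lambda_1(M)\le n$.

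For the lower bound, take every $x_v$ to be the same unit vector $e$. This is an orthonormal representation of $K_n$, since $K_n$ has no nonadjacent pairs and so imposes no orthogonality constraints. Its Gram matrix is the all-ones matrix $J$. Since $J\mathbf 1=n\mathbf 1$, we get $\lambda_1(J)=n$.

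For the upper bound, let $M\in\mathcal M(K_n)$. Then $M$ is positive semidefinite, so all its eigenvalues are nonnegative. Also $M_{vv}=1$ for every $v$, so $\Tr M=n$. Hence
\[\lambda_1(M)\le\sum_i\lambda_i(M)=\Tr M=n.\]

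Together these give $\vartheta(\ov K_n)=n$.

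An alternative is to argue directly from the definition of $\vartheta$. Here $\ov K_n$ has no edges, so every orthonormal representation of it consists of $n$ pairwise orthogonal unit vectors. For any unit vector $c$, Bessel's inequality gives $\sum_v(c\cdot x_v)^2\le 1$, so some $v$ has $(c\cdot x_v)^{-2}\ge n$. Equality is attained by taking $x_v$ to be the standard basis vectors and $c=n^{-1/2}\mathbf 1$.

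Neither argument has a real obstacle. The only point needing care is to keep straight which graph the orthonormal representation refers to, $G$ versus its complement $\ov G$, when applying \cref{lem:spectral-theta}.
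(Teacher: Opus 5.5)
Both of your arguments are correct. The paper gives no proof of this lemma and simply cites Janzer and Yip, so there is no in-paper argument to match. The standard one-line route is the sandwich inequality $\alpha(G)\le\vartheta(G)\le\chi(\ov G)$ applied to $G=\ov K_n$, which gives $n=\alpha(\ov K_n)\le\vartheta(\ov K_n)\le\chi(K_n)=n$. The paper itself uses the upper half of this, $\vartheta(\ov H)\le\chi(H)$, in \cref{lem:local-r}.

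Your second argument is that route specialized and made explicit. Bessel's inequality for the pairwise orthogonal vectors is exactly the usual proof of $\alpha\le\vartheta$. The standard basis with $c=n^{-1/2}\mathbf 1$ is the witness that replaces the appeal to $\chi(K_n)=n$. It is fully self-contained from the paper's min--max definition.

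Your first argument instead goes through \cref{lem:spectral-theta}, which is a genuine duality result, and the two halves swap roles. In the max formulation the explicit witness ($J$, from taking all vectors equal) gives the lower bound. The universal estimate $\lambda_1(M)\le\Tr M=n$ for PSD $M$ with unit diagonal gives the upper bound. This version is shorter once \cref{lem:spectral-theta} is available, and it fits the eigenvalue viewpoint of \cref{sec:ub}. However, it proves an essentially trivial fact by leaning on a much heavier cited result, so the direct Bessel argument is the more economical one to record.
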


\begin{lemma}\label{lem:submultiplicative}
    For graphs $G_1, G_2$ on the vertex set $[n]$, let $G_1\cup G_2$ denote the graph on vertex set $[n]$ with edge set $E(G_1)\cup E(G_2)$. Then we have $\vartheta(\ov{G_1\cup G_2})\le \vartheta(\ov{G_1})\cdot \vartheta(\ov{G_2})$. Consequently, if $G_1,\dots,G_k$ are graphs on the same vertex set $[n]$, and  $E(G_1)\cup\cdots\cup E(G_k) = E(K_n)$, then $\prod_{i=1}^k \vartheta(\ov{G_i})\ge n$.
\end{lemma}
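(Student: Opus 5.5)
The plan is to prove the two-graph inequality $\vartheta(\ov{G_1\cup G_2})\le \vartheta(\ov{G_1})\,\vartheta(\ov{G_2})$ by a tensor product of orthonormal representations, and then obtain the $k$-graph statement by induction combined with \cref{lem:theta-complete-graph}. One could instead work with the spectral form of \cref{lem:spectral-theta}. There a matrix $M\in\mathcal M(G_1\cup G_2)$ would need to be factored into a Schur product $M_1\circ M_2$ with $M_i\in\mathcal M(G_i)$, which is not evidently possible. So I will use the primal definition of $\vartheta$ directly.

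Note that an orthonormal representation of $G$ in the paper's sense is one where non-adjacent vertices of $G$ get orthogonal vectors. Write $\vartheta(\ov{G})=\min_{c,X}\max_v (c\cdot x_v)^{-2}$, where the minimum is over unit vectors $c$ and representations $X=\{x_v\}$ with $x_u\cdot x_v=0$ whenever $uv\notin E(G)$ and $u\neq v$. (If the paper's convention attaches $\vartheta(\ov G)$ to representations of $G$ via \cref{lem:spectral-theta}, this is exactly the matching formulation.) The steps are as follows.

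\begin{enumerate}[label=(\arabic*)]
\item Take optimal pairs $(c_1,X^{(1)})$ for $G_1$ and $(c_2,X^{(2)})$ for $G_2$. Set $y_v=x^{(1)}_v\otimes x^{(2)}_v$ and $c=c_1\otimes c_2$. Both are unit vectors, since $\|a\otimes b\|=\|a\|\|b\|$.
\item Check that $\{y_v\}$ is an orthonormal representation of $G_1\cup G_2$. If $u\ne v$ and $uv\notin E(G_1)\cup E(G_2)$, then $uv\notin E(G_1)$. Hence
\[y_u\cdot y_v=(x^{(1)}_u\cdot x^{(1)}_v)(x^{(2)}_u\cdot x^{(2)}_v)=0.\]
\item Compute $(c\cdot y_v)^{-2}=(c_1\cdot x^{(1)}_v)^{-2}(c_2\cdot x^{(2)}_v)^{-2}\le\vartheta(\ov{G_1})\,\vartheta(\ov{G_2})$. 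Taking the maximum over $v$ and then the minimum in the definition gives the first inequality.
\end{enumerate}

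There is a minor technicality: the definition requires $c\in\mathbb R^{V(G)}$, i.e. a fixed ambient dimension, while the tensor construction lives in dimension $n^2$. This is harmless. The vectors $y_v$ together with $c$ span at most $n+1$ dimensions, and one can instead project $c$ onto the span of the $y_v$ and renormalize. Projecting $c$ onto that span and renormalizing only increases each $|c\cdot y_v|$. If the span has dimension $n$, this lets us place everything isometrically in $\mathbb R^{V(G)}$. Alternatively, I would note that the value of $\vartheta$ does not depend on the ambient dimension, as is standard. I expect this bookkeeping to be the only real obstacle.

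For the consequence, induct on $k$ to get
\[\vartheta\bigl(\ov{G_1\cup\cdots\cup G_k}\bigr)\le\prod_{i=1}^k\vartheta(\ov{G_i}),\]
using $G_1\cup\cdots\cup G_k=(G_1\cup\cdots\cup G_{k-1})\cup G_k$. When the union is $K_n$, the left side equals $n$ by \cref{lem:theta-complete-graph}, which gives $\prod_{i=1}^k\vartheta(\ov{G_i})\ge n$.
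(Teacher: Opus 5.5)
Your strategy of tensoring optimal pairs $(c_i,X^{(i)})$ in the min formulation and then inducting is the right one. The paper does not prove this lemma itself but defers to Janzer and Yip \cite{JanzerYip2026ShortMonochromaticOddCycles}, and the standard argument is this tensor-product idea, as in Lov\'asz's bound $\vartheta(G\boxtimes H)\le\vartheta(G)\vartheta(H)$. As written, however, your proof imposes orthogonality on the wrong pairs.

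By the paper's definition, $\vartheta(\ov G)$ is a minimum over orthonormal representations of $\ov G$. These are unit vectors with $x_u\cdot x_v=0$ whenever $uv\notin E(\ov G)$, that is, whenever $uv\in E(G)$. The formula you wrote, with $x_u\cdot x_v=0$ whenever $uv\notin E(G)$, is the paper's definition of $\vartheta(G)$, not $\vartheta(\ov G)$. You have conflated two descriptions. In the max form of \cref{lem:spectral-theta}, $\vartheta(\ov G)$ is indeed attached to Gram matrices of representations of $G$. In the min form, it is attached to representations of $\ov G$.

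Your formula is therefore false. For $G=K_2$ there is no non-adjacent pair, so $x_1=x_2=c$ gives the value $1$, whereas $\vartheta(\ov{K_2})=2$. Consequently steps (1)--(3) actually establish $\vartheta(G_1\cup G_2)\le\vartheta(G_1)\vartheta(G_2)$. This is trivial: adding edges only removes constraints, so $\vartheta(G_1\cup G_2)\le\vartheta(G_1)$, and $\vartheta\ge 1$. It also gives nothing when $G_1\cup\cdots\cup G_k=K_n$. Under your formula the left-hand side for $K_n$ is $1$, not $n$, so your final appeal to \cref{lem:theta-complete-graph} contradicts the formula you used.

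The repair is easy, and it makes step (2) read more naturally. Take $X^{(i)}$ with $x^{(i)}_u\cdot x^{(i)}_v=0$ whenever $uv\in E(G_i)$. If $uv\in E(G_1)\cup E(G_2)$, then $uv$ lies in $E(G_1)$ or in $E(G_2)$. So one of the two factors in $y_u\cdot y_v=(x^{(1)}_u\cdot x^{(1)}_v)(x^{(2)}_u\cdot x^{(2)}_v)$ vanishes, and $\{y_v\}$ is an orthonormal representation of $\ov{G_1\cup G_2}$. Steps (1) and (3), the induction, and the use of $\vartheta(\ov{K_n})=n$ then go through unchanged.

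The ambient-dimension issue you flag is not a real obstacle; your projection argument handles it. The actual obstacle was the convention.
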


\subsection{Chebyshev polynomial}
The degree-$\rho$ Chebyshev polynomial $T_\rho$ is defined to satisfy
\[T_\rho(\cos \theta)=\cos(\rho\cdot\theta).\]
Concretely, for $x\ge 1$, we have
\[T_\rho(x)=\cosh\lpr{\rho\cdot \arcosh x}=\frac{1}{2}\left(\left(x+\sqrt{x^2-1}\right)^\rho+\left(x-\sqrt{x^2-1}\right)^\rho\right).\]
The key property we use is that $|T_\rho(x)|\le 1$ for all $x\in [-1,1]$.

\section{Proof of \cref{thm:ub}}
\label{sec:ub}

We begin with a few lemmas. The first relates the chromatic number of an induced subgraph of $G$ to the largest eigenvalue of a corresponding Gram matrix. When we write $M[H]$, or ``$M$ restricted to $H$'', we mean the submatrix of $M$ consisting of the rows and columns corresponding to the vertex set of $H$.

\begin{lemma}\label{lem:local-r}
Let $H$ be an induced subgraph of $G$. If $\chi(H)\le r$, then for each $M\in\mathcal M(G)$, we have $\lambda_1(M[H])\le r$.
\end{lemma}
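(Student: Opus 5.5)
Fix a proper $r$-coloring of $H$ with color classes $I_1,\dots,I_r$; each $I_j$ is an independent set in $H$. Because $H$ is an induced subgraph of $G$, each $I_j$ is also independent in $G$. The plan is to show that the Gram matrix $M[H]$ is dominated, in the positive semidefinite order, by a block matrix whose largest eigenvalue is at most $r$.

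The first step is to understand $M$ on a single class. Let $M\in\mathcal M(G)$ be the Gram matrix of an orthonormal representation $\{x_v\}$. For distinct $u,v\in I_j$ we have $uv\notin E(G)$, so $x_u\cdot x_v=0$. Since the vectors are unit vectors, $M[I_j]$ is the identity matrix, and the vectors $\{x_v\}_{v\in I_j}$ form an orthonormal set.

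The second step is to bound the Rayleigh quotient. For any $y\in\mathbb R^{V(H)}$,
\[
y^\top M[H]\,y=\Bigl\|\sum_{v\in V(H)} y_v x_v\Bigr\|^2=\Bigl\|\sum_{j=1}^r w_j\Bigr\|^2, \qquad w_j=\sum_{v\in I_j}y_vx_v .
\]
Orthonormality within each class gives $\|w_j\|^2=\sum_{v\in I_j}y_v^2$. By the triangle inequality and Cauchy--Schwarz,
\[
\Bigl\|\sum_j w_j\Bigr\|^2\le\Bigl(\sum_j\|w_j\|\Bigr)^2\le r\sum_j\|w_j\|^2=r\|y\|^2 .
\]
Hence $\lambda_1(M[H])=\max_{y\ne0}y^\top M[H]y/\|y\|^2\le r$, which is the claim.

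No step here is a genuine obstacle; the argument is essentially the standard inequality $\vartheta\le\chi$ in spectral form. The only point needing care is that independence in $H$ transfers to independence in $G$, and this is exactly where the hypothesis that $H$ is induced is used. If $r$ exceeds the number of nonempty classes, the empty classes contribute nothing and the bound only improves.
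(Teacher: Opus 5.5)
Your argument is correct, but it takes a more elementary route than the paper. The paper's proof reduces to two black boxes. Applying \cref{lem:spectral-theta} to $H$ gives $\lambda_1(M[H])\le\vartheta(\ov H)$; this implicitly uses $M[H]\in\mathcal M(H)$, which holds because $H$ is induced. The paper then cites the sandwich inequality $\vartheta(\ov H)\le\chi(H)$.

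You instead prove the bound directly and never mention $\vartheta$. The Gram vectors within each color class are orthonormal, and the triangle inequality plus Cauchy--Schwarz then bound the Rayleigh quotient by $r$. In effect you have proved the spectral form of $\vartheta(\ov H)\le\chi(H)$ from scratch. Your Cauchy--Schwarz step is exactly the positive semidefinite domination $M[H]\preceq r\,\mathrm{diag}(M[I_1],\dots,M[I_r])=rI$ announced in your opening sentence, so the stated plan and the executed argument agree.

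The paper's version is shorter and keeps the lemma inside the theta-function framework used throughout \cref{sec:ub}. Yours is self-contained and pinpoints where the hypothesis that $H$ is induced is used, a point the paper leaves implicit.
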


\begin{proof}
    \cref{lem:spectral-theta} yields $\lambda_1(M[H])\le \vartheta(\ov H)$, so this result follows immediately from the well-known fact that $\vartheta(\ov H)\le \chi(H)$.
\end{proof}

Let $e_v$ denote the standard unit vector whose $v$-coordinate is $1$, with all other coordinates equal to $0$. The following lemma shows that to evaluate $p(M)e_v$ for a polynomial $p$ of degree $\le \rho$, it suffices to evaluate $p$ on the submatrix of $M$ restricted to the ball of radius $\rho$ around $v$.

\begin{lemma}\label{lem:poly-local}
For a graph $G$, let $M\in\mathcal M(G)$ and let $\rho\ge 0$ be an integer. For $v\in V(G)$, let $B_v=B_G(v,\rho)$.
If $p$ is a polynomial of degree at most $\rho$, then we have
\[p(M)e_v=p(M[B_v])e_v,\]
where the right-hand side is extended by zero outside $B_v$.
\end{lemma}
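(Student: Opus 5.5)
By linearity in $p$, it suffices to prove the identity for the monomials $p(x)=x^j$ with $0\le j\le \rho$. We write $N:=M[B_v]$ and extend it by zero to a $V(G)\times V(G)$ matrix $\widetilde N$, so that $\widetilde N_{uw}=M_{uw}$ when $u,w\in B_v$ and $\widetilde N_{uw}=0$ otherwise. The claim then becomes $M^je_v=\widetilde N^je_v$ for $0\le j\le\rho$, since $\widetilde N^j$ restricted to $B_v$ is $N^j$ and is zero outside. The case $j=0$ is trivial, because $v\in B_v$.

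The key structural fact is that $M$ is supported on the edges of $G$ and on the diagonal: $M_{uw}\ne 0$ forces $u=w$ or $uw\in E(G)$, by the definition of $\mathcal M(G)$. We prove by induction on $j$ the stronger statement that for $0\le j\le\rho$ the vector $M^je_v$ equals $\widetilde N^je_v$ and is supported on $B_G(v,j)$.

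For the base case $j=0$, the vector $e_v$ is supported on $\{v\}=B_G(v,0)$. For the inductive step, suppose the statement holds for some $j<\rho$ and set $y:=M^je_v=\widetilde N^je_v$, which is supported on $B_G(v,j)$. For any $u$ we have $(My)_u=\sum_{w\in B_G(v,j)}M_{uw}y_w$, and each nonzero term needs $u=w$ or $u\sim w$. Hence $u\in B_G(v,j+1)$, which gives the support claim. Moreover, for $u\in B_G(v,j+1)\subseteq B_v$ (using $j+1\le\rho$), every $w$ in the sum lies in $B_G(v,j)\subseteq B_v$, so $M_{uw}=\widetilde N_{uw}$. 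For $u\notin B_v$, both $(My)_u$ and $(\widetilde Ny)_u$ are zero: the first by the support claim, the second by the definition of $\widetilde N$. Therefore $My=\widetilde Ny$, which completes the induction.

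Summing over the monomials of $p$, with its coefficients, gives $p(M)e_v=p(\widetilde N)e_v$, and this is $p(M[B_v])e_v$ extended by zero. There is no real obstacle here. The only point needing care is to carry the support claim along in the induction: agreement of $M$ and $\widetilde N$ is only guaranteed on entries inside $B_v$, so we must know that $y$ never leaves $B_G(v,j)\subseteq B_v$ while $j<\rho$.
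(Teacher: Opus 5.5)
Your proof is correct and uses the same idea as the paper: the sparsity pattern of $M$ means $M^j e_v$ only involves walks of length $j\le\rho$ from $v$, and these stay inside $B_v$. The paper states this via the walk expansion of $(M^s)_{wv}$, while your induction with the carried support claim spells that step out. One small point: your case split omits $u\in B_v\setminus B_G(v,j+1)$, but your argument for $u\in B_G(v,j+1)$ only uses $u\in B_v$, so it covers that case unchanged.
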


\begin{proof}
Since $M_{uw}=0$ unless $u=w$ or $uw\in E(G)$, for each positive integer $s$, the vector $M^s e_v$ is supported on the coordinates corresponding to vertices at graph distance at most $s$ from $v$. Thus, $(M^s)_{wv}$ is a weighted sum over length-$s$ walks from $v$ to $w$, and therefore $(M^s)_{wv}=0$ if $\dist_G(v,w)>s$.

If $s\le \rho$, then every walk of length $s$ starting at $v$ remains inside $B_G(v,\rho)$.  Hence we have $M^s e_v=M[B_v]^s e_v$ for every $0\le s\le \rho$. The result follows by linearity for every polynomial of degree at most $\rho$.
\end{proof}

Now we bound the trace of $p(M)$ for a certain type of polynomial $p$ of degree at most $\rho$.

\begin{lemma}\label{lem:trace}
Let $G$ be a graph on $n$ vertices and let $M\in \cM(G)$. Let $p$ be a polynomial of degree at most $\rho$ such that $|p(x)|\le 1$ for all $x\in [0,r]$. Suppose the ball of radius $\rho$ around $v$ for every $v\in V(G)$ is $r$-colorable in $G$. Then we have
\[ \Tr\bigl(p(M)^2\bigr)\le n.\]
\end{lemma}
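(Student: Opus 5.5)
Since $M$ is symmetric, so is $p(M)$, and therefore
\[\Tr\bigl(p(M)^2\bigr)=\sum_{v\in V(G)} e_v^\top p(M)^2 e_v=\sum_{v\in V(G)}\|p(M)e_v\|^2 .\]
It is therefore enough to show $\|p(M)e_v\|^2\le 1$ for every vertex $v$; summing over the $n$ vertices then gives the bound $n$. This reduces the global trace estimate to a local estimate at each vertex, which is exactly where \cref{lem:poly-local} enters.

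Fix $v$ and set $B_v=B_G(v,\rho)$, viewed as an induced subgraph of $G$. By \cref{lem:poly-local}, since $\deg p\le\rho$, we have $p(M)e_v=p(M[B_v])e_v$, extended by zero outside $B_v$, so $\|p(M)e_v\|=\|p(M[B_v])e_v\|$. Now $M[B_v]$ is a principal submatrix of a positive semidefinite matrix, so it is symmetric and PSD, and its eigenvalues are all $\ge 0$. By hypothesis $B_v$ is $r$-colorable, so \cref{lem:local-r} gives $\lambda_1(M[B_v])\le r$. Hence every eigenvalue of $M[B_v]$ lies in $[0,r]$.

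By the spectral theorem, $p(M[B_v])$ has the same eigenvectors as $M[B_v]$, with eigenvalues $p(\lambda_i)$. Since each $\lambda_i\in[0,r]$, we have $|p(\lambda_i)|\le 1$, so the operator norm satisfies $\|p(M[B_v])\|\le 1$. Therefore $\|p(M[B_v])e_v\|\le\|e_v\|=1$. Squaring and summing over $v$ gives $\Tr(p(M)^2)\le n$.

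The only subtle step is the localization: we cannot bound the spectrum of $M$ itself, because $G$ need not be $r$-colorable. The locality of $p(M)e_v$ from \cref{lem:poly-local} is precisely what lets us replace $M$ by $M[B_v]$, whose spectrum lies in $[0,r]$. The remaining points are routine but worth checking: we need $\lambda_{\min}\ge 0$ (from positive semidefiniteness) as well as $\lambda_1\le r$, because the hypothesis on $p$ only controls $|p|$ on $[0,r]$.
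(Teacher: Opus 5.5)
Your proof is correct and follows the paper's argument essentially step for step. You localize via \cref{lem:poly-local}, confine the spectrum of $M[B_v]$ to $[0,r]$ using positive semidefiniteness and \cref{lem:local-r}, bound the operator norm of $p(M[B_v])$ by $1$, and then sum $\|p(M)e_v\|_2^2$ over all vertices to get $\Tr\bigl(p(M)^2\bigr)\le n$.
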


\begin{proof}
Fix $v\in V(G)$ and let $B_v=B_G(v,\rho)$ for convenience. By assumption, $B_v$ is $r$-colorable. Thus by \cref{lem:local-r}, we have $\lambda_1(M[B_v])\le r$.
Furthermore, since $M$ is positive semidefinite, so is $M[B_v]$, so every eigenvalue of $M[B_v]$ lies in $[0,r]$.  Since $|p(x)|\le 1$ for all $x\in [0,r]$, we claim that \[\|p(M[B_v])\|_{\mathrm{op}}\le 1.\] 
Indeed, let $\lambda_1,\dots, \lambda_m\in [0,r]$ be the eigenvalues of $M[B_v]$. Then $p(\lambda_1),\dots, p(\lambda_m)\in [-1,1]$ are the eigenvalues of $p(M[B_v])$. So we have $\|p(M[B_v])\|_{\mathrm{op}} = \max_{i\in [m]} |p(\lambda_i)|\le 1$.
By \cref{lem:poly-local}, we have $p(M)e_v=p(M[B_v])e_v$, and thus $\|p(M)e_v\|_2\le 1$.
Summing over all the vertices, we obtain
\[\Tr(p(M)^2)=\sum_{v\in V(G)}e_v^Tp(M)^2e_v=\sum_{v\in V(G)}\|p(M)e_v\|_2^2\le n.\qedhere\]
\end{proof}

Now we are ready to give an upper bound for $\vartheta(\ov G)$ when every ball of radius $\rho$ in $G$ is $r$-colorable.

\begin{lemma}\label{lem:cheb}
Let $G$ be a graph on $n$ vertices. Suppose every ball of radius $\rho$ in $G$ is $r$-colorable. Then we have
\[
\vartheta(\ov G)\le r + \frac{r}{4}((2\sqrt{n})^{1/\rho}-1)^2.
\]
\end{lemma}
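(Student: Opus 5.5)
The plan is to take an extremal Gram matrix, apply \cref{lem:trace} to a rescaled Chebyshev polynomial, and compare the trace bound with the contribution of the top eigenvalue alone. By \cref{lem:spectral-theta}, choose $M\in\mathcal M(G)$ with $\lambda:=\lambda_1(M)=\vartheta(\ov G)$. If $\lambda\le r$ there is nothing to prove, so assume $\lambda>r$.

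\textbf{Step 1: the polynomial.} Let
\[
p(x)=T_\rho\!\left(\tfrac{2x}{r}-1\right).
\]
This has degree $\rho$. The affine map $x\mapsto \tfrac{2x}{r}-1$ sends $[0,r]$ onto $[-1,1]$, so $|p(x)|\le 1$ on $[0,r]$. By hypothesis every ball of radius $\rho$ is $r$-colorable, so \cref{lem:trace} gives $\Tr(p(M)^2)\le n$.

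\textbf{Step 2: lower bound on the trace.} The matrix $p(M)^2$ is symmetric with eigenvalues $p(\mu)^2\ge 0$, where $\mu$ ranges over the eigenvalues of $M$. Hence its trace is at least $p(\lambda)^2$, and so $p(\lambda)\le\sqrt n$ (note $p(\lambda)>0$).

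\textbf{Step 3: unwinding the Chebyshev bound.} Set $y=\tfrac{2\lambda}{r}-1>1$. Using the explicit formula for $T_\rho$ on $[1,\infty)$ and dropping the positive second term,
\[
T_\rho(y)\ge \tfrac12\bigl(y+\sqrt{y^2-1}\bigr)^\rho .
\]
With $t:=(2\sqrt n)^{1/\rho}$ this gives $y+\sqrt{y^2-1}\le t$. The function $s\mapsto \tfrac12(s+s^{-1})$ is increasing for $s\ge1$, and $s=y+\sqrt{y^2-1}$ satisfies $y=\tfrac12(s+s^{-1})$. Therefore $y\le \tfrac12(t+t^{-1})$.

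\textbf{Step 4: translating back to $\lambda$.} Since $\lambda=\tfrac r2(y+1)$, we get
\[
\lambda\le \tfrac r4\bigl(t+2+t^{-1}\bigr).
\]

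\textbf{Step 5: matching the stated form.} The last step is to show
\[
\tfrac r4\bigl(t+2+t^{-1}\bigr)\le r+\tfrac r4(t-1)^2 .
\]
After multiplying by $4t/r$, this is equivalent to $t^3-3t^2+3t-1=(t-1)^3\ge 0$, which holds because $t\ge1$. I expect no genuine obstacle. The only points needing care are that $p(\lambda)$ is positive, so that $p(\lambda)^2\le n$ yields $p(\lambda)\le\sqrt n$, and this final algebraic comparison.
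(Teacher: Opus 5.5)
Your proof is correct and follows the paper's argument. It uses the same rescaled Chebyshev polynomial $T_\rho(2x/r-1)$ and \cref{lem:trace}, and it compares $\Tr(p(M)^2)$ with the top eigenvalue's contribution. The only difference is in the final algebra: you invert $y+\sqrt{y^2-1}\le t$ exactly, giving the slightly sharper $\vartheta(\ov G)\le r+\frac{r}{4}\cdot\frac{(t-1)^2}{t}$, and then weaken it via $(t-1)^3\ge 0$, whereas the paper drops terms to get $1+2\sqrt{\eta/r}\le t$ directly.
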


\begin{proof}
Choose
\[p(x)=T_\rho\left(\frac{2x}{r}-1\right).\]
Then $p$ has degree $\rho$. Furthermore, for $x\in[0,r]$, we have $|p(x)|\le 1$, since $\frac{2x}{r}-1\in[-1,1]$.
By \cref{lem:trace}, for every $M\in\mathcal M(G)$ we have
\[\Tr\bigl(p(M)^2\bigr)\le n.\]

By \cref{lem:spectral-theta}, we can choose $M\in\mathcal M(G)$ such that $\lambda_1(M)=\vartheta(\ov G)$. For convenience, let $\lambda=\lambda_1(M)$. If $\lambda\le r$, there is nothing to prove.  Otherwise, suppose $\lambda=r+\eta$ for $\eta > 0$. Then $p(M)$ has an eigenvalue
\[p(r+\eta)=T_\rho\left(1+\frac{2\eta}{r}\right).\]
Hence we must have
\[
T_\rho\left(1+\frac{2\eta}{r}\right)^2\le\Tr\bigl(p(M)^2\bigr)\le n.\]
Thus $T_\rho\left(1+\frac{2\eta}{r}\right)\le\sqrt{n}$.
For $x\ge 1$, we have 
\[T_\rho(x)=\frac{1}{2}\left(\left(x+\sqrt{x^2-1}\right)^\rho+\left(x-\sqrt{x^2-1}\right)^\rho\right)\ge \frac{1}{2}\left(x+\sqrt{x^2-1}\right)^\rho.\]
Substituting $x=1+\frac{2\eta}{r}$ gives
\[1+2\sqrt{\frac{\eta}{r}} \le 1+\frac{2\eta}{r}+2\sqrt{(\frac{\eta}{r})^2+\frac{\eta}{r}}\le \left(2T_\rho\left(1+\frac{2\eta}{r}\right)\right)^{1/\rho} \le(2\sqrt{n})^{1/\rho}.\]
Further rearranging yields
\[
\eta \le \frac{r}{4}((2\sqrt{n})^{1/\rho}-1)^2,
\]
as desired.
\end{proof}

We are now ready to prove \cref{thm:ub-gen}.

\begin{proof}[Proof of \cref{thm:ub-gen}]
Let $N=(1+\delta)r^k$ for convenience. Fix a $k$-edge-coloring of $K_N$ with color classes $G_i$ for $i\in \{1,\dots,k\}$, so $K_N=G_1\cup\cdots\cup G_k$. By \cref{lem:submultiplicative}, we have $\prod_{i=1}^k\vartheta(\ov{G_i})\ge N$. Then for some color $j$, we have
\[
\vartheta(\ov G_j)\ge N^{1/k}.
\]
Without loss of generality, assume $j=1$. Let $\rho\ge 0$ be the largest integer such that every ball of radius $\rho$ in $G_1$ is $r$-colorable. Note that $\rho$ is finite because otherwise, in the limit as $\rho\to \infty$, \cref{lem:cheb} yields $\vartheta(\ov G)\le r<N^{1/k}$, a contradiction. If $\rho=0$, then $G_1$ contains a subgraph of diameter at most $2$ that is not $r$-colorable, and we are done, so assume $\rho>0$. By \cref{lem:cheb}, we have
\[
r+\frac{r}{4}((2\sqrt{N})^{1/\rho}-1)^2 \ge N^{1/k}=r(1+\delta)^{1/k}.
\]
Since $\delta \le 1$, we have $r(1+\delta)^{1/k}\ge r \exp(\delta/(2k))$. Then we have
\[
\exp(\delta/(2k))\le 1+\frac{1}{4}((2\sqrt{N})^{1/\rho}-1)^2\le \exp(\frac{1}{4}((2\sqrt{N})^{1/\rho}-1)^2).
\]
Rearranging then yields
\[
1+\sqrt{\frac{2\delta}{k}}\le (2\sqrt{N})^{1/\rho},
\]
so
\[
\rho \le \frac{\log (2\sqrt{N})}{\log (1+\sqrt{\frac{2\delta}{k}})}\le \frac{\frac{1}{2} k\log r+O(1)}{\frac{1}{2}\sqrt{\frac{2\delta}{k}}}=O(k^{3/2}\delta^{-1/2}\log r).
\]
Then by our definition of $\rho$, taking a ball of radius $\rho+1$ yields a subgraph $H$ of $G_1$ with $\chi(H)\ge r+1$ and  $\diam(H)\le 2\rho+2$.
\end{proof}

\section{Lower bound}
\label{sec:lb}

Our lower bound construction is a direct generalization of Day and Johnson's construction for the case $r=2$ in
\cite{DayJohnson2017MulticolourRamseyOddCycles}. See also \cite{zhu2025}, which contains an equivalent algebraic reframing of Day and Johnson's construction.

As in \cite{DayJohnson2017MulticolourRamseyOddCycles}, we build our construction inductively. The role that the odd cycle takes in Day and Johnson's construction is taken here by what we call a \emph{bracelet graph} (named for its resemblance to a beaded bracelet), which we now define.

\begin{definition}
    We define a \emph{bracelet graph} $B_{r,n}$ via the following construction. Starting with a cycle $C_{2n+1}$ with vertices $v_0,\dots,v_{2n}$ in order, we blow up each of the vertices $v_{2i-1}$, where $1\le i\le n$, into a copy of $K_{r-1}$. Concretely, $B_{r,n}$ consists of an alternating sequence of $n+1$ single vertices (the ``little beads'') and $n$ $(r-1)$-cliques (the ``big beads''), such that each of the $(r-1)$-cliques is complete to the single vertices on both sides, and the two vertices on the ends are adjacent to each other.
\end{definition}

We label the vertices of $B_{r,n}$ by $w_0,\dots,w_{rn}$, where for each integer $i\in [0,n]$, $w_{ir}$ corresponds to $v_{2i}$, while for $i\in [n]$, $w_{(i-1)r+1},\dots,w_{ir-1}$ form the copy of $K_{r-1}$ that is complete to $w_{(i-1)r}$ and $w_{ir}$. See \cref{fig:Brn} for an example.

\begin{figure}[!h]
    \centering
    \begin{tikzpicture}[scale=1.5]
  
  \coordinate (1) at (0, 0);
  
  \coordinate (2a) at (1, 0.6);
  \coordinate (2b) at (1, -0.6);
  
  \coordinate (3) at (2, 0);
  
  \coordinate (4a) at (3, 0.6);
  \coordinate (4b) at (3, -0.6);
  
  \coordinate (5) at (4, 0);
  
  \coordinate (6a) at (5, 0.6);
  \coordinate (6b) at (5, -0.6);
  
  \coordinate (7) at (6, 0);
  
  \draw (1) -- (2a);
  \draw (1) -- (2b);
  \draw (2a) -- (2b);
  
  \draw (2a) -- (3);
  \draw (2b) -- (3);
  
  \draw (3) -- (4a);
  \draw (3) -- (4b);
  \draw (4a) -- (4b);
  
  \draw (4a) -- (5);
  \draw (4b) -- (5);
  
  \draw (5) -- (6a);
  \draw (5) -- (6b);
  \draw (6a) -- (6b);
  
  \draw (6a) -- (7);
  \draw (6b) -- (7);
  
  \draw (1) to[bend right=80] (7);
  
  \foreach \v in {1, 2a, 2b, 3, 4a, 4b, 5, 6a, 6b, 7} {
    \node[circle, fill=black, draw=black, line width=1.5pt, inner sep=2pt] at (\v) {};
  }
  
  \node[above=5pt] at (1) {$w_0$};
  \node[above=5pt] at (2a) {$w_1$};
  \node[below=5pt] at (2b) {$w_2$};
  \node[above=5pt] at (3) {$w_3$};
  \node[above=5pt] at (4a) {$w_4$};
  \node[below=5pt] at (4b) {$w_5$};
  \node[above=5pt] at (5) {$w_6$};
  \node[above=5pt] at (6a) {$w_7$};
  \node[below=5pt] at (6b) {$w_8$};
  \node[above=5pt] at (7) {$w_9$};
  
\end{tikzpicture}
    \caption{$B_{3,3}$}
    \label{fig:Brn}
\end{figure}

It is easy to see that $\chi(B_{r,n})=r+1$; indeed, in a proper $r$-coloring, each of the single vertices $w_0,w_r,\dots,w_{rn}$ would need to have the same color, which is impossible since $w_0$ and $w_{rn}$ are adjacent. When $r=2$, we simply recover $B_{r,n}=C_{2n+1}$.

Loosely following the notation of \cite{DayJohnson2017MulticolourRamseyOddCycles}, given a graph $G$ and a vertex $O\in V(G)$, we say that $G$ admits an $(r;n_1,\dots,n_k)$-rooted bracelet decomposition (written $(r;n_1,\dots,n_k)$-RBD for shorthand) with root $O$ if the edges of $G$ can be colored with $k$ colors such that for $1\le i\le k$, there is an edge-preserving map $\phi_i$ from the $i$th color class to $B_{r,n_i}$, where $O\in V(G)$ is the unique vertex mapped to $w_0$ in $B_{r,n_i}$. In other words, for each $1\le i\le k$, the $i$th color class is isomorphic to a subgraph of a blowup $B'$ of $B_{r,n_i}$, and $O\in V(G)$ is mapped by the isomorphism to the unique vertex in $B'$ corresponding to $w_0\in B_{r,n_i}$.

\begin{lemma}\label{lem:induct}
    If $K_{r^k+1}$ admits an $(r;n_1,\dots,n_k)$-RBD, then $K_{r^{k+1}+1}$ admits an $(r;n_1+1,n_2,\dots,n_k,2n_1)$-RBD.
\end{lemma}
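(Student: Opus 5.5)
We take the $r$-fold blowup of the given coloring, glued at the root. Let $V=V(K_{r^k+1})$ with root $O$, and let $V'=V\setminus\{O\}$, so $|V'|=r^k$. The new vertex set is
\[\{O\}\cup (V'\times[r]),\]
which has exactly $r^{k+1}+1$ vertices. We color the edges as follows.
\begin{enumerate}[label=(\roman*)]
\item An edge $O(u,j)$ gets the color of $Ou$.
\item An edge $(u,j)(v,j')$ with $u\ne v$ (whether $j=j'$ or not) gets the color of $uv$, except that some color-$1$ edges will be moved to the new color $k+1$ as described below.
\item An edge $(u,j)(u,j')$ with $j\ne j'$ gets the new color $k+1$.
\end{enumerate}

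For colors $2\le i\le k$ the required map is $(u,j)\mapsto\phi_i(u)$ and $O\mapsto w_0$. This is edge-preserving because every color-$i$ edge projects to a color-$i$ edge $uv$ with $u\ne v$. The root stays the unique preimage of $w_0$ because only $O$ projects to $O$. So colors $2,\dots,k$ keep their parameters $n_2,\dots,n_k$.

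The work is in colors $1$ and $k+1$, which we handle by "unwrapping" $B_{r,n_1}$.

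\emph{Color $k+1$.} The same-vertex edges form vertex-disjoint copies of $K_r$ on the fibres $\{u\}\times[r]$. Such a $K_r$ embeds into $B_{r,m}$ as one big bead together with an adjacent little bead. The length $2n_1$ suggests viewing $B_{r,2n_1}$ as a "double cover" of $B_{r,n_1}$ opened at $w_0$. Concretely, send each fibre to the bead positions lying over $\phi_1(u)$, using the two lifts of that position: one lift takes one copy index, and the other lift takes the remaining $r-1$ indices, which then form a big bead. The root $O$ has no color-$(k+1)$ edges and is sent to $w_0$.

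\emph{Color $1$.} Of the old color-$1$ edges, keep those inside a single copy, and those incident to $O$, in color $1$. Map them via a shifted version of $\phi_1$ into $B_{r,n_1+1}$. The extra bead is used to absorb the edges from $O$ into the copies: the vertices of different copies that are adjacent to $O$ sit on opposite sides of the new bead, so $O$ remains the unique preimage of $w_0$.

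\emph{Cross-copy edges.} The cross-copy color-$1$ edges $(u,j)(v,j')$ are recolored to $k+1$. They are mapped along the covering $B_{r,2n_1}\to B_{r,n_1}$, so that the edge $\phi_1(u)\phi_1(v)$ lifts to an edge between the lifts assigned to copies $j$ and $j'$.

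The main obstacle is to choose the lift and shift assignments (which copy index goes to which lift, and where the new bead is inserted) so that several requirements hold at once:
\begin{itemize}
\item every cross-copy color-$1$ edge and every fibre $K_r$ lands on edges of $B_{r,2n_1}$;
\item big beads receive at most $r-1$ copy indices per position, as the blowup definition requires;
\item the edge $w_0w_{rn}$ closing the bracelet is used consistently.
\end{itemize}
I would first work this out for $r=2$, where the statement reduces to Day--Johnson's $C_{2n_1+1}\mapsto C_{2n_1+3},C_{4n_1+1}$ step, and check it there. I would then generalize by letting one copy index play the role of little beads and the other $r-1$ indices play the role of big beads.
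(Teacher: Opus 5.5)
There is a genuine gap, and it comes from the one concrete rule you commit to for color $1$. You keep every within-copy color-$1$ edge and every color-$1$ edge at $O$ in color $1$, and you send all cross-copy color-$1$ edges to color $k+1$. Under that rule, the new color-$1$ class restricted to $\{O\}\cup(V'\times\{1\})$ is an isomorphic copy of the old color-$1$ class. So no choice of shift or lift can map it into $B_{r,n_1+1}$ unless the old class already mapped there.

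This fails at the very first step. For $k=1$ the hypothesis holds with $K_{r+1}=B_{r,1}$ in a single color. Your color-$1$ class then contains a $K_{r+1}$ on $O$ and the first copy, but $B_{r,2}$ has clique number $r$. For $r=2$ this is just the fact that a triangle has no homomorphism to $C_5$; more generally there is no homomorphism $C_{2n_1+1}\to C_{2n_1+3}$.

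Your color-$(k+1)$ class, by contrast, is trivially fine. It misses $O$ and is properly colored by the copy index, so it maps into the single $K_r$ on $w_1,\dots,w_r$. The new color is absorbing none of the difficulty.

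The missing idea is to treat the copies asymmetrically. Copy $1$ is shifted by one bead ($r$ positions) relative to copies $2,\dots,r$. This forces some edges at $O$ into the new color and some cross-copy edges to stay in color $1$. Concretely, the paper does the following.
\begin{itemize}
\item It moves the edges $O(u,1)$ with $\phi_1(u)\in\{w_1,\dots,w_{r-1}\}$, and the edges $O(u,j)$ for $j\ge 2$ with $\phi_1(u)=w_{rn_1}$, to color $k+1$.
\item It moves a cross-copy color-$1$ edge to color $k+1$ only in two cases: both endpoints lie over the same big bead, or one endpoint is in copy $1$, the other in some copy $j\ge 2$, and the copy-$1$ endpoint lies over the later of the two adjacent beads.
\item Every other cross-copy color-$1$ edge stays in color $1$. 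This includes edges between copies $2$ and $3$ over adjacent beads, and edges whose copy-$1$ endpoint lies over the earlier bead.
\end{itemize}

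With these choices, color $1$ maps to $B_{r,n_1+1}$ via $\phi_1$ on copies $2,\dots,r$ and via $\phi_1$ shifted by $r$ on copy $1$. This is your \emph{opposite sides} picture, but it only works once each copy keeps $O$-edges on just one side.

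Color $k+1$ then becomes a genuine blowup of $B_{r,2n_1}$. Its little beads are $O$ together with the copy-$1$ preimages of the $2n_1$ beads of $B_{r,n_1}$ other than $w_0$. Its big beads are the corresponding preimages in copies $2,\dots,r$, placed by copy index.

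Your lift picture for color $k+1$ agrees with this. However, as written, the proposal leaves exactly this bookkeeping — which is the whole content of the lemma — as an unresolved \emph{main obstacle}.
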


\begin{proof}
    Given an $k$-edge-coloring $c_k$ of $K_{r^k+1}$ that exhibits the $(r;n_1,\dots,n_k)$-RBD, we construct an $(k+1)$-edge-coloring $c_k'$ of $K_{r^{k+1}+1}$ as follows. Let $V(K_{r^k+1})=\{O\}\cup U$, where $O$ is the root of the relevant RBD and $U$ consists of the other $r^k$ vertices. We identify $V(K_{r^{k+1}+1})$ with $\{O\}\cup U_1\cup \cdots \cup U_r$, where each $U_i=\{x^{(i)}:\: x\in U\}$ is a copy of $U$. For all $x,y\in U$ with $x\neq y$, let $c_k'(x^{(i)},y^{(j)})=c_k(x,y)$ for all $i,j\in [r]$. Likewise, let $c_k'(O,x^{(i)})=c_k(O,x)$ for $x\in U$. Finally, let $c_k'(x,x')$ be the new $(k+1)$-st color for all $x\in U$. 

\begin{figure}
    \centering
 \begin{tikzpicture}[scale=1.2,xscale=1.8]
  \def\yshift{2.2}
  \def\xshift{-0.15}
  
  \coordinate (O-left) at (-0.1, 0);
  
  \coordinate (2a-1) at (1, 0.3 + \yshift);
  \coordinate (2b-1) at (1, -0.3 + \yshift);
  \coordinate (2c-1) at (1, 0 + \yshift);
  \coordinate (3-1) at (2, 0 + \yshift);
  \coordinate (4a-1) at (3, 0.3 + \yshift);
  \coordinate (4b-1) at (3, -0.3 + \yshift);
  \coordinate (4c-1) at (3, 0 + \yshift);
  \coordinate (5-1) at (4, 0 + \yshift);
  \coordinate (6a-1) at (5, 0.3 + \yshift);
  \coordinate (6b-1) at (5, -0.3 + \yshift);
  \coordinate (6c-1) at (5, 0 + \yshift);
  \coordinate (7-1) at (6, 0 + \yshift);
  
  \coordinate (2a-2) at (1 + \xshift, 0.3);
  \coordinate (2b-2) at (1 + \xshift, -0.3);
  \coordinate (2c-2) at (1 + \xshift, 0);
  \coordinate (3-2) at (2 + \xshift, 0);
  \coordinate (4a-2) at (3 + \xshift, 0.3);
  \coordinate (4b-2) at (3 + \xshift, -0.3);
  \coordinate (4c-2) at (3 + \xshift, 0);
  \coordinate (5-2) at (4 + \xshift, 0);
  \coordinate (6a-2) at (5 + \xshift, 0.3);
  \coordinate (6b-2) at (5 + \xshift, -0.3);
  \coordinate (6c-2) at (5 + \xshift, 0);
  \coordinate (7-2) at (6 + \xshift, 0);
  
  \coordinate (2a-3) at (1 - \xshift, 0.3 - \yshift);
  \coordinate (2b-3) at (1 - \xshift, -0.3 - \yshift);
  \coordinate (2c-3) at (1 - \xshift, 0 - \yshift);
  \coordinate (3-3) at (2 - \xshift, 0 - \yshift);
  \coordinate (4a-3) at (3 - \xshift, 0.3 - \yshift);
  \coordinate (4b-3) at (3 - \xshift, -0.3 - \yshift);
  \coordinate (4c-3) at (3 - \xshift, 0 - \yshift);
  \coordinate (5-3) at (4 - \xshift, 0 - \yshift);
  \coordinate (6a-3) at (5 - \xshift, 0.3 - \yshift);
  \coordinate (6b-3) at (5 - \xshift, -0.3 - \yshift);
  \coordinate (6c-3) at (5 - \xshift, 0 - \yshift);
  \coordinate (7-3) at (6 - \xshift, 0 - \yshift);
  
  \coordinate (O-right) at (7, 0);
  
  
  \foreach \i in {1,2,3} {
    \draw[blue, densely dotted] (O-left) -- (2a-\i);
    \draw[blue, densely dotted] (O-left) -- (2b-\i);
    \draw[blue, densely dotted] (2a-\i) -- (2b-\i);
    
    \draw[blue, densely dotted] (2a-\i) -- (3-\i);
    \draw[blue, densely dotted] (2b-\i) -- (3-\i);
    
    \draw[blue, densely dotted] (3-\i) -- (4a-\i);
    \draw[blue, densely dotted] (3-\i) -- (4b-\i);
    \draw[blue, densely dotted] (4a-\i) -- (4b-\i);
    
    \draw[blue, densely dotted] (4a-\i) -- (5-\i);
    \draw[blue, densely dotted] (4b-\i) -- (5-\i);
    
    \draw[blue, densely dotted] (5-\i) -- (6a-\i);
    \draw[blue, densely dotted] (5-\i) -- (6b-\i);
    \draw[blue, densely dotted] (6a-\i) -- (6b-\i);
    
    \draw[blue, densely dotted] (6a-\i) -- (7-\i);
    \draw[blue, densely dotted] (6b-\i) -- (7-\i);
    
    \draw[blue, densely dotted] (7-\i) -- (O-right);
  }
  
  \foreach \v in {2a, 2b, 3, 4a, 4b, 5, 6a, 6b, 7} {
    \draw[red] (\v-1) -- (\v-2);
    \draw[red] (\v-2) -- (\v-3);
    \draw[red] (\v-1) -- (\v-3);
  }
  
    \foreach \i in {1,2,3} {
      \foreach \j in {1,2,3} {
        \ifnum\i<\j
        \draw[red] (2a-\i) -- (2b-\j);
        \draw[red] (2b-\i) -- (2a-\j);
        \draw[red] (4a-\i) -- (4b-\j);
        \draw[red] (4b-\i) -- (4a-\j);
        \draw[red] (6a-\i) -- (6b-\j);
        \draw[red] (6b-\i) -- (6a-\j);
        \fi
      }
    }
  
  \foreach \i in {1,2,3} {
    \foreach \j in {1,2,3} {
      \ifnum\i<\j
        \draw[blue, densely dotted] (2a-\i) -- (3-\j);
        \draw[blue, densely dotted] (2b-\i) -- (3-\j);
        \draw[blue, densely dotted] (2a-\j) -- (3-\i);
        \draw[blue, densely dotted] (2b-\j) -- (3-\i);
        
        \draw[blue, densely dotted] (3-\i) -- (4a-\j);
        \draw[blue, densely dotted] (3-\i) -- (4b-\j);
        \draw[blue, densely dotted] (3-\j) -- (4a-\i);
        \draw[blue, densely dotted] (3-\j) -- (4b-\i);
        
        \draw[blue, densely dotted] (4a-\i) -- (5-\j);
        \draw[blue, densely dotted] (4b-\i) -- (5-\j);
        \draw[blue, densely dotted] (4a-\j) -- (5-\i);
        \draw[blue, densely dotted] (4b-\j) -- (5-\i);
        
        \draw[blue, densely dotted] (5-\i) -- (6a-\j);
        \draw[blue, densely dotted] (5-\i) -- (6b-\j);
        \draw[blue, densely dotted] (5-\j) -- (6a-\i);
        \draw[blue, densely dotted] (5-\j) -- (6b-\i);
        
        \draw[blue, densely dotted] (6a-\i) -- (7-\j);
        \draw[blue, densely dotted] (6b-\i) -- (7-\j);
        \draw[blue, densely dotted] (6a-\j) -- (7-\i);
        \draw[blue, densely dotted] (6b-\j) -- (7-\i);
      \fi
    }
  }

  \foreach \i in {2,3} {
    \foreach \j in {1} {
        \draw[red] (2a-\i) -- (3-\j);
        \draw[red] (2b-\i) -- (3-\j);
        
        \draw[red] (3-\i) -- (4a-\j);
        \draw[red] (3-\i) -- (4b-\j);
        
        \draw[red] (4a-\i) -- (5-\j);
        \draw[red] (4b-\i) -- (5-\j);
        
        \draw[red] (5-\i) -- (6a-\j);
        \draw[red] (5-\i) -- (6b-\j);
        
        \draw[red] (6a-\i) -- (7-\j);
        \draw[red] (6b-\i) -- (7-\j);
     
    }
  }
  \draw[red] (O-left) -- (2a-1);
  \draw[red] (O-left) -- (2b-1);
  \draw[red] (O-right) -- (7-2);
  \draw[red] (O-right) -- (7-3);

  
  
  \node[circle, fill=black, draw=black, line width=1.5pt, inner sep=2pt] at (O-left) {};
  \node[left=8pt] at (O-left) {$O$};
  
  \node[circle, fill=black, draw=black, line width=1.5pt, inner sep=2pt] at (O-right) {};
  \node[right=8pt] at (O-right) {$O$};

  \foreach \i in {1,2,3} {
    \foreach \v in {2c,4c,6c} {
        \node[circle, densely dotted, fill=white, draw=gray, line width=2pt, inner sep=10pt] at (\v-\i) {};
    }
    }
    
    \foreach \i in {1,2,3} {
        \draw[blue, densely dotted] (2a-\i) -- (2b-\i);
        \draw[blue, densely dotted] (4a-\i) -- (4b-\i);
        \draw[blue, densely dotted] (6a-\i) -- (6b-\i);
    
    }
  
  \foreach \i in {1,2,3} {
    \foreach \v/\label in {2a/$2a$, 2b/$2b$, 3/$3$, 4a/$4a$, 4b/$4b$, 5/$5$, 6a/$6a$, 6b/$6b$, 7/$7$} {
      \node[circle, fill=white, draw=black, line width=1.5pt, inner sep=3pt] at (\v-\i) {};
          }
  }
  
    \end{tikzpicture}
    \caption{Extending an $(r;n_1,\dots,n_k)$-RBD to an $(r;n_1+1,n_2,\dots,n_k,2n_1)$-RBD, for $r=3$, $n_1=3$. Here the blue dotted lines represent a color whose color class goes from a subgraph of a blowup of $B_{r,n_1}$ in $c_k$ to a subgraph of a blowup of $B_{r,n_1+1}$ in the new coloring $c_{k+1}$, while the red solid lines represent the new $(k+1)$st color in $c_{k+1}$, whose color class is a subgraph of a blowup of $B_{r,2n_1}$.}
        \label{fig:blowup}
\end{figure}

    In this coloring $c_k'$, for all $i\in [k]$, it remains true that the color class $G_i$ of color $i$ is a subgraph of a blowup of $B_{r,n_i}$ rooted at $O$. The new color, meanwhile, is $r$-partite. We now modify $c_k'$ into a new coloring $c_{k+1}$ that yields the desired $(r;n_1+1,n_2,\dots,n_k,2n_1)$-RBD, as follows. For convenience, call the first color blue, and call the new $(k+1)$-st color red. Fix a vertex partition that exhibits the fact that the blue graph $G_1$ is a (subgraph of a) rooted blowup of $B_{r,n_1}$, i.e. fix an edge-preserving map $\phi: G_1 \to B_{r,n_1}$ that sends $O$, and no other vertex, to $w_0$; the parts of the partition are the preimages of each vertex of $B_{r,n_1}$. Now for $x,y\in U$ with $x\neq y$ such that the edge $xy$ is blue in $c_k$, and for $i,j\in [r]$, recolor the edge $x^{(i)}y^{(j)}$ from blue to red in $c_{k+1}$ if and only if one of the following is true:
    \begin{itemize}
        \item $i\neq j$, and for some $m\in [n_1]$ and $a,b\in [r-1]$, we have $\phi(x)=w_{(m-1)r+a}, \phi(y)=w_{(m-1)r+b}$ (i.e. $\phi(x),\phi(y)$ are part of the same big bead $K_{r-1}$ in $B_{r,n_1}$).
        \item $i=1$, $j>1$, and for some $m\in [n_1]$ and $a\in [r-1]$, we have $\phi(x)=w_{mr}$ and $\phi(y)=w_{(m-1)r+a}$ (i.e. $\phi(x)$ is the little bead that comes after the big bead in $B_{r,n_1}$ that $\phi(y)$ is part of).
        \item $i=1$, $j>1$, and for some $m\in [n_1]$ and $a\in [r-1]$, we have $\phi(x)=w_{(m-1)r+a}$ and $\phi(y)=w_{(m-1)r}$ (i.e. $\phi(y)$ is the little bead that comes before the big bead in $B_{r,n_1}$ that $\phi(x)$ is part of).
    \end{itemize}
    We also recolor the edges between $O$ and $x^{(i)}$ red when either $i=1$ and $\phi(x)\in \{w_1,\dots,w_{r-1}\}$, or $i>1$ and $\phi(x)=w_{rn_1}$. See \cref{fig:blowup}.

    Then by construction, the red graph is a subgraph of a blowup of $B_{r,2n_1}$, where the (big and little) beads in the blue color class of the first copy $U_1$ of $U$, as a (subgraph of a) blowup of $B_{r,n_1}$, form the little beads in the red graph (with $O$ serving as the last little bead), while the corresponding beads in the blue color classes of the other copies of $U$ combine together into the big beads in the red graph. Meanwhile, in the blue graph, we can merge all but the first copy of $U$ back together, and identify each vertex $w_{i}$ in $\phi(U_1)$ with the vertex $w_{i+r}$ in $\phi(U_j)$ for $2\le j\le r$, to see that the blue graph is a subgraph of a blowup of $B_{r,n_1+1}$. This gives the desired $(r;n_1+1,n_2,\dots,n_k,2n_1)$-RBD of $K_{r^{k+1}+1}$.
\end{proof}
\begin{proof}[Proof of \cref{thm:lb}]
We inductively apply \cref{lem:induct}. Note that for $k=1$, $K_{r+1}$ trivially admits an $(r;1)$-RBD, since $B_{r,1}=K_{r+1}$. At each step $k\ge 1$, we sort the sequence $n_1\le \cdots\le n_k$ in increasing order and apply \cref{lem:induct} to extend the ``shortest'' color, which corresponds to $n_1$. 

When we iteratively apply the transformation given by \cref{lem:induct}, the sequence $(2n_1+1,\dots,2n_k+1)$ undergoes the same recursive transformation that the sequence $(r_1,\dots,r_k)$ defined in \cite{DayJohnson2017MulticolourRamseyOddCycles} undergoes in \cite[Lemma~5]{DayJohnson2017MulticolourRamseyOddCycles}. Thus, as reasoned in \cite[Corollary~6]{DayJohnson2017MulticolourRamseyOddCycles}, in $2^{O(t^2)}$ steps we can reach a mininum value $n_1$ of $>2^t$, meaning that in $k$ steps we can reach a minimum diameter of $2^{\Omega(\sqrt{\log k})}$ as desired.

\end{proof}

\section{Concluding remarks}
\label{sec:conclusion}
The upper and lower bounds on $L_r(k)$ for $r>2$, as in the case $r=2$, remain very far apart. It seems very plausible that the lower bound can be vastly improved by a better construction -- perhaps more easily for larger values of $r$ than for $r=2$, since the iterative construction given in \cref{sec:lb} does not make heavy use of the fact that we have more than two copies of the construction from the previous step to work with. The upper bound, meanwhile, is entangled by the relationship between the Lov\'asz theta function and bounds on Shannon capacities, just as it is in the $r=2$ case (see the discussion in \cite[Section~3]{JanzerYip2026ShortMonochromaticOddCycles}). Some very recent developments on the latter front (see e.g. \cite{itty2026improved}) may thus hold promise for further progress on these bounds.

In the case $r=2$, the authors of \cite{ACJMR2026} showed that the upper bound on the diameter of the smallest monochromatic odd cycle in a $k$-edge-coloring of $K_n$ can be significantly improved using different methods in the general setting where $n=(1+\delta)r^k$, as long as $\delta$ does not go to zero too quickly. Their arguments, specifically \cite[Lemma~2.1]{ACJMR2026}, can be used to analogously conclude that the bound from \cref{thm:ub-gen} is far from tight for relatively large $\delta$ in the case $r>2$.

\section*{Acknowledgements}
The first author would like to thank Noga Alon and Oliver Janzer for separately bringing this interesting question to his attention. The authors would also like to thank Ting-Wei Chao and Lisa Sauermann for the helpful discussions and insights they offered for the problem. ChatGPT 5.5 was used as a sounding board for developing the proof of \cref{thm:ub}, and Claude Haiku 4.5 was used to assist in creating the tikz diagrams in \cref{fig:Brn} and \cref{fig:blowup}.

\printbibliography

\end{document}